\newtheorem{thm}{Theorem}[section]
\newtheorem{cor}[thm]{Corollary}
\newtheorem{lem}[thm]{Lemma}
\numberwithin{equation}{section}
\renewcommand{\thefootnote}
\author {B\'echir Amri$^*$ and Khawla Kerfaf $ ^{**}$}
\title{ On the translation invariant operators in $\ell^p(\mathbb{Z}^d)$  }
\date{}
\begin{document}
\maketitle
\begin{center}
   $^*$Taibah University, College of Sciences, Department of Mathematics, P. O. BOX 30002, Al Madinah AL Munawarah, Saudi Arabia.\\
\textbf{ e-mail:} bechiramri69@gmail.com\\
  $^{**}$Universit\'{e} Tunis El Manar, Facult\'{e} des sciences de Tunis,\\ Laboratoire d'Analyse Math\'{e}matique
       et Applications,\\ LR11ES11, 2092 El Manar I, Tunisie.\\
     \textbf{ e-mail:} kh.karfaf@gmail.com
\end{center}
\begin{abstract}
  In this paper we study boundedness of translation invariant operators in  the discrete space $\ell^p(\mathbb{Z}^d)$.  In this  context a Mikhlin type multiplier theorem   is  given,  yielding boundedness for   certain known    operators . We also give
  $\ell^p-\ell^q$ boundedness of a discrete wave equation.
  \\  \textbf{ Keywords}. Discrete Fourier transforms, Discrete Laplacian,    Calder\'{o}n-Zygmund operators.
\\\textbf{2010 Mathematics Subject Classification}. Primary 39A12; Secondary  47B38, 35L05.
\end{abstract}
\section{Introduction}
It is well known that translation invariant operator from
$L^p$ into $L^q$ may be represented by     convolution with a tempered distribution, or equivalently by   Fourier multiplier
transformation. This was originally proved  in    the classical article of   H\"{o}rmander \cite{Hor}. Through  many aspects of harmonic analysis,
many studies have been devoted to the  topic of the  $L^p$-bounded of    translation invariant operator.
The  most famous are the works of    Calder\'{o}n and Zygmund on  the singular integral operators, with a large number of generalizations.\par
In this paper we consider  translation invariant operator $T$ on $\mathbb{Z}^d $.
The  problem is essentially the multiplier problem,
$$\mathcal{F}_{\mathbb{Z}^d}(T(f))=m \mathcal{F}_{\mathbb{Z}^d}(f)$$
where the function $m$   is   defined on the  Torus  $  \mathbb{R}^d/\mathbb{Z}^d$. In this setting  a  H\"{o}rmander's type  theorem  for  $\ell^p-\ell^q$ boundedness of  $T$  and  an $\ell^p$-theorem of   Mikhlin- type are  given.  We apply  our   results to get $L^p$-estimate for the discrete wave equation.
\par We begin by introducing  the following notations.
Let $\mathbb{T}^d = \mathbb{R}^d/\mathbb{Z}^d $ be the $d$-dimensional
torus. Functions on  $\mathbb{T}^d$ are functions $f$ on $\mathbb{R}^d$ that satisfy $f (x+n) = f (x)$ for all $x \in \mathbb{R}^d$
and $n\in \mathbb{Z}^d$. Such functions are called $1$-periodic in every coordinate. Haar measure
on  $\mathbb{T}^d$   is the restriction of $d$-dimensional Lebesgue measure to the set
$[0,1)^d$.  This measure is still denoted by $dx$ and given by
$$\int_{\mathbb{T}^d}f(x)dx=\int_{[0,1)^d}f(x)dx.$$
We  denote by  $L^p(\mathbb{T}^d)$, $1\leq p\leq \infty$    the  Lebesgue space $L^p([0,1)^d,dx)$.
The inner product  of the Hilbert space  $L^2(\mathbb{T}^d)$ is given by
$$\langle f,g \rangle=\int_{  \mathbb{T}^d}f(\xi) \overline{g(\xi)}d\xi.$$
The functions $\psi_n: \xi\rightarrow e^{2i\pi  \xi.n}$, indexed by $n\in \mathbb{Z}^d$, form   a complete orthonormal system of $L^2(\mathbb{T}^d)$
 where  for $x=(x_1,...,x_d)$ and  $y=(y_1,...,y_d)$ in $\mathbb{R}^d$
 $$x.y=x_1y_1+...+x_dy_d\qquad\text{and}\qquad |x|=(x.x)^{1/2}.$$
  By  $\ell^p= \ell^p(\mathbb{Z}^d)$, $1\leq p < \infty$ , we denote the usual Banach space of p-summable complex-valued function  $f=(f(n))_{n\in \mathbb{Z}^n}$
equipped with the norm
$$\|f\|_{\ell^p}=\left(\sum_{n\in \mathbb{Z}^d } |f(n)|^p\right)^{\frac{1}{p}} $$
and $\ell^\infty $  the space of bounded function on $\mathbb{Z}^d$ with $\|f\|_{\ell^\infty}=\sup_{n\in \mathbb{Z}^d}|f(n)|$.
 We note the following  elementary embedding relations
 \begin{equation}\label{llp}
   \ell^q\subset \ell^p; \quad \text{and}\quad \|f\|_{\ell^q}\leq \|f\|_{\ell^p},\quad 1\leq p\leq q\leq \infty.
\end{equation}
 \par For   $f\in \ell^2(\mathbb{Z}^d)$   its Fourier transform is given by
 $$\mathcal{F}_{\mathbb{Z}^d}(f)(\xi)=\sum _{n\in \mathbb{Z}^d}f(n)e^{i2\pi n.\xi},  \qquad \xi\in  \mathbb{R}^d.$$
The  Fourier transform $\mathcal{F}_{\mathbb{Z} ^d}$  is an isometry
from  $\ell^2(\mathbb{Z}^d)$ into $L^2(\mathbb{T}^d)$ and its  inverse $\mathcal{F}_{\mathbb{Z}^d}^{-1}$   is given by
$$\mathcal{F}_{\mathbb{Z}^d}^{-1}(u)(n)= \int_{\mathbb{T}^d} u(\xi)e^{-i2\pi  \xi.n }dx,\qquad u\in L^2(\mathbb{T}^d). $$
By Riesz-Thorin convexity theorem, the  Fourier transform $\mathcal{F}_{\mathbb{Z} ^d}$ and its     inverse  $\mathcal{F}_{\mathbb{Z}^d}^{-1} $
 satisfy  the Hausdorff-Young inequalities
 \begin{equation}\label{12}
    \|\mathcal{F}_{\mathbb{Z}^d}(f)\|_{L^{p'}} \leq    \|f\|_{\ell^p},
 \end{equation}
and
 \begin{equation}\label{13}
    \|\mathcal{F}_{\mathbb{Z}^d}^{-1}(u)\|_{\ell^{p'}} \leq    \|u\|_{L^p},
 \end{equation}
 for , $1<p\leq 2 $ and $1/p+1/p'=1$.
 \par  Convolution product of two functions $f$  and $g$  of $\ell^2(\mathbb{Z}^d)$  is defined by
 $$f*_{\mathbb{Z}^d}g(n)=g*_{\mathbb{Z}^d}f(n)=\sum_{k\in\mathbb{Z}^d}f(k)g(n-k); \quad n\in \mathbb{Z}^d.$$
 If  $f,\;g\in \ell^1(\mathbb{Z}^d)$ then $f*_{\mathbb{Z}^d} g \in \ell^1(\mathbb{Z}^d)$  and $$\mathcal{F}_{\mathbb{Z}^d}(f*_{\mathbb{Z}^d}g)=\mathcal{F}_{\mathbb{Z}^d}(f)\mathcal{F}_{\mathbb{Z}^d}(g).$$
 Suppose  $f \in \ell^p(\mathbb{Z}^d)$  and $g \in \ell^q(\mathbb{Z}^d)$ with $1 \leq  p, q, r \leq \infty$,
  $\frac {1}{p}+\frac {1}{q}=\frac {1}{r}+1 .$ Then
\begin{equation}\label{ygin}
 \|f*_{\mathbb{Z}^d}g\|_{\ell^r}\leq \|f\|_{\ell^p}\|g\|_{\ell^q} \quad (\text{Young's Inequality}).
\end{equation}
\section{Translation invariant operators }
In this section we shall be concerned with the space   of bounded operators  $T$ from $\ell^p$ to $\ell^q$ for $p\leq q$,
  which commute with translations; that is,  $\tau_n T=T\tau_n$
for all $n\in \mathbb{Z}^d$, where $\tau_n(f)(k)=f(n+k)$. It is not difficult to see that $T$ is a convolution operator. Indeed, let
$$K=T(\mathds{1}_{\{0\}}),$$
where $\mathds{1}_A$ is the  characteristic function of a set $A$. Consider first, functions  $f$ with compact support and write
 $$f=\mathds{1}_{\{0\}}*_{\mathbb{Z}^d}f= \sum_{n\in \mathbb{Z}^d}f(n)\tau_{-n}(\mathds{1}_{\{0\}}).$$
Since  $T$ is translation invariant operator  then
 $$T(f)=T(\mathds{1}_{\{0\}}*_{\mathbb{Z}^d} f)=  \sum_{n\in \mathbb{Z}^d}f(n)T(\tau_{-n}(\mathds{1}_{\{0\}}))= \sum_{n\in \mathbb{Z}^d}f(n) \tau_{-n}T(\mathds{1}_{\{0\}}))=
  K*_{\mathbb{Z}^d}f.$$
Now  for  function  $f\in \ell^1(\mathbb{Z}^d)$  we  let
$$f_j=\sum_{|n|\leq j}f(n) \mathds{1}_{\{n\}}, \qquad j \geq 0$$
Clearly  the sequence $(f_j)_j$ converges   to $f$  in   $\ell^r(\mathbb{Z}^d)$  for all $1\leq r<\infty$  and
$$ \|T(f_j)-K*_{\mathbb{Z}^d} f\|_{\ell^q}=\|K*_{\mathbb{Z}^d}(f_j-f)\|_{\ell^q}\leq \|K\|_{\ell^q}\|f_j-f\|_{\ell^1}$$
which implies that $(T(f_j))_j$ converges to $K*_{\mathbb{Z}^d} f$ in $\ell^q(\mathbb{Z}^d)$. But $T: \ell^p(\mathbb{Z}^d)\rightarrow \ell^q(\mathbb{Z}^d)$ is bounded  and $(f_j)_j$ converges   to $f$  in   $\ell^p(\mathbb{Z}^d)$,  thus  by uniqueness of the  limit we may have   $T(f)=K*_{\mathbb{Z}^d}f$.
Notice that the $\ell^p-\ell^q$ boundedness of $T$ implies  that $K\in \ell^q(\mathbb{Z}^d)$. We state the following
\begin{thm}
 If  $T$ is a bounded translation invariant operator from  $\ell^p(\mathbb{Z}^d)$ to  $\ell^q(\mathbb{Z}^d)$, $p\leq q$,  then
there is  exists a function $K\in \ell^q(\mathbb{Z}^d)$  such that
$$  T(f)=K*_{\mathbb{Z}^d}f,\qquad  f\in\ell^1(\mathbb{Z}^d).$$
\end{thm}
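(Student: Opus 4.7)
The plan is to make rigorous the sketch given in the paragraph preceding the theorem, organizing it into three clean steps. First, I would define the candidate kernel $K := T(\mathds{1}_{\{0\}})$. Since $\mathds{1}_{\{0\}} \in \ell^p(\mathbb{Z}^d)$ and $T \colon \ell^p \to \ell^q$ is bounded, this immediately yields $K \in \ell^q(\mathbb{Z}^d)$, covering the integrability assertion of the theorem.

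Second, I would verify the convolution formula $T(f) = K *_{\mathbb{Z}^d} f$ for every finitely supported $f$. Writing $f = \sum_{n} f(n)\, \tau_{-n}(\mathds{1}_{\{0\}})$ as a finite sum, linearity of $T$ together with translation invariance $T\tau_{-n} = \tau_{-n} T$ would give
\[
T(f) = \sum_{n} f(n)\, \tau_{-n}(K) = K *_{\mathbb{Z}^d} f,
\]
with no convergence issues involved at this stage.

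Third, I would extend to general $f \in \ell^1(\mathbb{Z}^d)$ by the truncation $f_j = \sum_{|n|\leq j} f(n)\,\mathds{1}_{\{n\}}$. Because of the embedding (\ref{llp}), the sequence $(f_j)$ converges to $f$ simultaneously in $\ell^1$ and in $\ell^p$. Boundedness of $T$ would then give $T(f_j) \to T(f)$ in $\ell^q$, while Young's inequality (\ref{ygin}) applied with the exponent triple $(q,1,q)$ (which satisfies $1/q + 1/1 = 1/q + 1$) would give
\[
\|K *_{\mathbb{Z}^d} (f_j - f)\|_{\ell^q} \leq \|K\|_{\ell^q}\, \|f_j - f\|_{\ell^1} \longrightarrow 0.
\]
Uniqueness of limits in $\ell^q$ would then force $T(f) = K *_{\mathbb{Z}^d} f$.

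Since the theorem is essentially a structural verification, I do not expect any genuine obstacle. The one point requiring a moment's care is the choice of approximating sequence: the same $(f_j)$ must converge in $\ell^p$ (to exploit boundedness of $T$) and in $\ell^1$ (to exploit Young's inequality with $K \in \ell^q$), and the uniform truncation does both at once precisely because $f \in \ell^1 \subset \ell^p$. Writing the hypothesis $p \leq q$ into the statement is only used implicitly, through the fact that $\mathds{1}_{\{0\}}$ lies in every $\ell^r$; the rest of the argument is exponent-independent.
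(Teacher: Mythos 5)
Your argument is correct and is essentially identical to the one the paper gives in the paragraph preceding the theorem: the same kernel $K=T(\mathds{1}_{\{0\}})$, the same verification on finitely supported functions via translation invariance, and the same truncation-plus-Young's-inequality limit argument to pass to general $f\in\ell^1(\mathbb{Z}^d)$. No further comment is needed.
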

\par  Translation invariant operator can also   be described as Fourier multiplier transformation   $T_m$ defined by
  $$\mathcal{F}_{\mathbb{Z}^d}(T_m(f))=m\mathcal{F}_{\mathbb{Z}^d}(f) $$
 where $m$  is   a  bounded  measurable function $m$ on $\mathbb{T}^d$.
An important class of $T_m$ is given by   $ L^{r,\infty}(\mathbb{T}^d)$ for $r>1$,  that is  the space  of     measurable functions $m$  such that for  some constant $c>0$,
\begin{equation}\label{LL}
  \int_{\{\xi\in (0,1 )^d,\; |m(\xi)|\geq s\}} dx\leq \frac{c}{s^r}, \qquad s>0.
\end{equation}
In particular if  $m$   satisfies the estimate
\begin{equation}\label{mc}
 |m(\xi)|\leq c | \xi |^{ -r}; \qquad \xi\in(0,1)^d,\; \xi\neq0,
\end{equation}
   for $0<\alpha<d$ then $m\in L^{  d/r,\infty}(\mathbb{T}^d)$.
\begin{thm}\label{th1}
If   $m\in L^{\alpha,\infty}(\mathbb{T}^d)$ with  $\alpha > 1$
  then $T_m$ is a bounded operator from  $\ell^{p}(\mathbb{Z}^d)$ into $ \ell^{q}(\mathbb{Z}^d)$, provided that
$$1<p\leq2\leq q<\infty \quad,\quad \dfrac{1}{p}-\dfrac{1}{q}= \frac{1}{\alpha}.$$
 \end{thm}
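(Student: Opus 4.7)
The plan is to bracket $T_m$ between two Hausdorff--Young-type inequalities sharpened to the Lorentz scale, and to handle the intermediate multiplication by $m$ with O'Neil's Hölder-type inequality in Lorentz spaces.

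For $f\in\ell^p(\mathbb{Z}^d)$ of compact support (dense in $\ell^p$), I would first use the Paley--Hardy--Littlewood sharpening of (\ref{12}), the bounded map $\mathcal{F}_{\mathbb{Z}^d}:\ell^p\to L^{p',p}(\mathbb{T}^d)$ obtained by real interpolation between the trivial endpoints $\mathcal{F}_{\mathbb{Z}^d}:\ell^1\to L^\infty$ and $\mathcal{F}_{\mathbb{Z}^d}:\ell^2\to L^2$, to place $\hat f:=\mathcal{F}_{\mathbb{Z}^d}(f)$ in the Lorentz space $L^{p',p}$. The hypothesis $\tfrac{1}{p}-\tfrac{1}{q}=\tfrac{1}{\alpha}$, rewritten for the conjugate exponents, reads $\tfrac{1}{q'}=\tfrac{1}{p'}+\tfrac{1}{\alpha}$, so O'Neil's inequality for the product of $m\in L^{\alpha,\infty}$ and $\hat f\in L^{p',p}$ yields $\|m\hat f\|_{L^{q',p}}\le C\|m\|_{L^{\alpha,\infty}}\|\hat f\|_{L^{p',p}}$. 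The dual Lorentz sharpening of (\ref{13}), namely $\mathcal{F}_{\mathbb{Z}^d}^{-1}:L^{q',p}\to\ell^{q,p}$, combined with the Lorentz-sequence embedding $\ell^{q,p}\subset\ell^{q,q}=\ell^q$ (valid because $p\le 2\le q$ ensures $p\le q$), transfers the estimate back to $\ell^q$ and gives $\|T_m f\|_{\ell^q}\le C\|m\|_{L^{\alpha,\infty}}\|f\|_{\ell^p}$. A density argument then completes the proof.

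The main obstacle is the careful bookkeeping of Lorentz second indices at each stage. A naive argument using only the stated inequalities (\ref{12})--(\ref{13}) together with a strong Hölder bound does not close: since $m$ lies only in weak $L^\alpha$, an estimate of the form $\|mg\|_{L^{q'}}\le C\|m\|_{L^{\alpha,\infty}}\|g\|_{L^{p'}}$ is in fact false when $p'>q'$, as elementary power-function tests on $\mathbb{T}^d$ show. The true O'Neil output is $m\hat f\in L^{q',p'}$, which is too weak since $L^{q',p'}\not\subset L^{q'}$. The Paley--Hardy--Littlewood refinement is precisely what shrinks the Lorentz second index of $\hat f$ from $p'$ down to $p$; with this improvement the O'Neil product lives in $L^{q',p}$, which after inverse Fourier transformation embeds into $\ell^q$ via $\ell^{q,p}\subset\ell^{q,q}$. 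Once these interpolation ingredients are in place, the verification is routine.
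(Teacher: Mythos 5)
Your argument is correct, but it follows a genuinely different route from the paper's. The paper reproduces H\"ormander's original scheme: it first proves a weighted Fourier inequality $\bigl(\int |\mathcal{F}_{\mathbb{Z}^d}(f)|^p\varphi^{2-p}\,d\xi\bigr)^{1/p}\le c_p\|f\|_{\ell^p}$ for any weight $\varphi$ with $|\{\varphi\ge s\}|\le c/s$ (Lemma \ref{l1}), by applying the Marcinkiewicz interpolation theorem to the operator $f\mapsto\mathcal{F}_{\mathbb{Z}^d}(f)/\varphi$ relative to the measure $\varphi^2\,d\xi$, between weak $(1,1)$ and weak $(2,2)$; it then combines this with Hausdorff--Young via H\"older's inequality (Lemma \ref{2.6}) to obtain $\|m\,\mathcal{F}_{\mathbb{Z}^d}(f)\|_{L^{q'}}\le c\|f\|_{\ell^p}$ upon choosing $\varphi=|m|^{\alpha}$, applies the inverse Hausdorff--Young inequality (\ref{13}), and finally disposes of the remaining range $q'<p$ by passing to the adjoint $T_{\overline m}$. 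Your Lorentz-space bookkeeping encodes exactly the same refinement: the weighted estimate of Lemma \ref{l1} is the classical avatar of the Paley--Hardy--Littlewood bound $\mathcal{F}_{\mathbb{Z}^d}:\ell^p\to L^{p',p}$, and your diagnosis of why a naive strong-type H\"older argument cannot close (the product of an $L^{\alpha,\infty}$ function with an $L^{p'}$ function only lands in $L^{q',p'}\not\subset L^{q'}$) is precisely the obstruction that forces this refinement in either formulation. What your route buys is a cleaner, symmetric chain --- interpolated Hausdorff--Young, O'Neil's product inequality, interpolated inverse Hausdorff--Young, and the embedding $\ell^{q,p}\subset\ell^{q,q}=\ell^q$ --- with no case split between $p\le q'$ and $p>q'$ and an explicit dependence on $\|m\|_{L^{\alpha,\infty}}$; what it costs is reliance on heavier imported machinery (the real-interpolation identities for the couples $(\ell^1,\ell^2)$ and $(L^1,L^2)$ with the correct second indices, and O'Neil's inequality), whereas the paper's Marcinkiewicz argument is essentially self-contained. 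Both are complete proofs.
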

  This result is originally proved in \cite{Hor} for translation invariant operator on $\mathbb{R}^d$  and  in   \cite{stein} for  translation invariant operator  on $\mathbb{Z}$, for completeness we  extended this result to  $\mathbb{Z}^d$.  The proof of Theorem \ref{th1} follows closely the  argument of  \cite{Hor}.
\begin{lem}\label{l1}
Let $\varphi\geq 0$ be a measurable function such that for some constant $c>0$
\begin{equation}\label{phi}
  \int_{\{\xi\in (0,1 )^d,\;\varphi(\xi)\geq s\}} \;dx   \leq\dfrac{c}{s},\qquad s>0.
\end{equation}
Then for all $1<p\leq  2$ there exists a constant $c_{p}>0$ such that
\begin{equation}\label{01}
   \Big(\int_{(0,1)^d} |\mathcal{F}_{\mathbb{Z}^d}(f)(\xi)|^p |\varphi(\xi)|^{2-p} d\xi \Big )^{\frac{1}{p}} \leq  c_{p}\;\| f \|_{ \ell^p };  \quad   f\in \ell^p(\mathbb{Z}^d).
\end{equation}
\end{lem}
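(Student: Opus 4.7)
The plan is to obtain \eqref{01} by complex interpolation (in the style of Stein) applied to an analytic family of linear operators that mediates between Plancherel's identity and the weak-$L^1$ hypothesis on $\varphi$. Concretely, for $z\in\mathbb{C}$ with $0\le\Re(z)\le 1$, I would work with
\[
T_z f(\xi)\;:=\;\varphi(\xi)^{z}\,\mathcal{F}_{\mathbb{Z}^d}(f)(\xi),\qquad \xi\in(0,1)^d,
\]
using the convention $\varphi^{z}:=0$ on $\{\varphi=0\}$. For $f$ with finite support the map $z\mapsto T_zf$ is entire and of admissible growth; the conclusion for general $f\in\ell^p(\mathbb{Z}^d)$ follows by a routine density argument afterwards.

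First I would verify the two boundary estimates. On the line $\Re(z)=0$, since $|\varphi^{iy}|=1$ on $\{\varphi>0\}$, Plancherel yields
\[
\|T_{iy}f\|_{L^2(\mathbb{T}^d)}\;=\;\|\mathcal{F}_{\mathbb{Z}^d}f\|_{L^2(\mathbb{T}^d)}\;=\;\|f\|_{\ell^2(\mathbb{Z}^d)}.
\]
On the line $\Re(z)=1$, the trivial bound $|\mathcal{F}_{\mathbb{Z}^d}f(\xi)|\le\|f\|_{\ell^1}$ combined with \eqref{phi} applied at level $s/\|f\|_{\ell^1}$ gives
\[
\big|\{\xi:|T_{1+iy}f(\xi)|>s\}\big|\;\le\;\big|\{\xi:\varphi(\xi)>s/\|f\|_{\ell^1}\}\big|\;\le\;\frac{c\,\|f\|_{\ell^1}}{s},
\]
i.e.\ $\|T_{1+iy}f\|_{L^{1,\infty}(\mathbb{T}^d)}\le c\,\|f\|_{\ell^1}$, uniformly in $y\in\mathbb{R}$.

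Interpolating at $\theta=(2-p)/p\in[0,1)$, which corresponds to the intermediate exponent $p=2/(1+\theta)$, the interpolation machinery produces
\[
\|T_\theta f\|_{L^p(\mathbb{T}^d)}\;\le\;C_p\,\|f\|_{\ell^p(\mathbb{Z}^d)}.
\]
Since $p\theta=2-p$, the pointwise identity $|T_\theta f(\xi)|^p=\varphi(\xi)^{2-p}|\mathcal{F}_{\mathbb{Z}^d}f(\xi)|^p$ is exactly the integrand in \eqref{01}, and the bound for finitely supported $f$ is promoted to all of $\ell^p(\mathbb{Z}^d)$ in the standard way.

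The main obstacle is the weak-type nature of the endpoint at $\Re(z)=1$: the classical Stein/Riesz-Thorin theorem requires strong-type boundary bounds. I would handle this either by invoking a version of Stein's theorem with weak-type endpoints (Stein-Weiss, or Hunt's extension of complex interpolation to Lorentz spaces), or, more elementarily, by a Marcinkiewicz-style truncation: for each level $\lambda>0$ split $\varphi=\varphi\mathds{1}_{\{\varphi\le\lambda\}}+\varphi\mathds{1}_{\{\varphi>\lambda\}}$, decompose the contribution of $|\mathcal{F}_{\mathbb{Z}^d}f|$ accordingly, then use Plancherel on the bulk part and the weak-$L^1$ bound \eqref{phi} on the tail, optimizing $\lambda$ against the distribution level on the left-hand side of \eqref{01}. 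Aside from this interpolation subtlety, every step is routine.
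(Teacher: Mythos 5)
Your setup is correct and you have correctly located the difficulty, but the difficulty is not actually overcome, and neither of your two proposed fixes closes it as stated. The classical Stein interpolation theorem for analytic families requires strong-type bounds on both boundary lines; at $\Re(z)=1$ you only have a map into $L^{1,\infty}(\mathbb{T}^d)$, which is a quasi-Banach (non-normable) space, and the extensions of analytic-family interpolation that tolerate such an endpoint are not off-the-shelf results one can simply cite without checking hypotheses. Your ``more elementary'' truncation fallback fares no better: splitting at $\{\varphi\le\lambda\}\cup\{\varphi>\lambda\}$ and using Chebyshev with Hausdorff--Young on the bulk gives
\[
\big|\{\xi: \varphi(\xi)^{\theta}|\mathcal{F}_{\mathbb{Z}^d}f(\xi)|>s\}\big|\;\le\;\frac{c}{\lambda}+\|f\|_{\ell^p}^{p'}\lambda^{\theta p'}s^{-p'},
\]
and optimizing in $\lambda$ (using $1+\theta p'=1/(p-1)$) yields exactly $\big|\{|T_\theta f|>s\}\big|\lesssim(\|f\|_{\ell^p}/s)^{p}$. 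That is only the weak-type $(p,p)$ bound at the critical exponent itself; the integral $p\int_0^\infty s^{p-1}\,|\{|T_\theta f|>s\}|\,ds$ then diverges logarithmically, so the strong bound \eqref{01} does not follow. To recover strong type one needs weak-type information at \emph{two distinct} exponents for a \emph{single} $p$-independent operator, which your family $T_\theta$ (whose evaluation point moves with $p$) does not directly provide.

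The paper's proof supplies precisely the missing reorganization, and it is worth seeing how close it is to yours: since $\theta-2/p=-1$, one has the identity $\|\varphi^{\theta}\mathcal{F}_{\mathbb{Z}^d}f\|_{L^p(d\xi)}=\|\mathcal{F}_{\mathbb{Z}^d}f/\varphi\|_{L^p(\mu)}$ with $d\mu=\varphi^2\,d\xi$. The single operator $Tf=\mathcal{F}_{\mathbb{Z}^d}(f)/\varphi$, mapped into the measure space $((0,1)^d,\mu)$, is of weak type $(1,1)$ (because $|Tf|\le\|f\|_{\ell^1}/\varphi$ and a layer-cake computation turns \eqref{phi} into $\mu(\{\varphi\le t\})\le 2ct$) and of weak type $(2,2)$ (by Plancherel, since $\varphi^2\le |\mathcal{F}_{\mathbb{Z}^d}f|^2/s^2$ on the level set). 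The ordinary Marcinkiewicz theorem then gives strong $(p,p)$ for $1<p<2$, which is \eqref{01}. So your analytic family is the right object, but the change of measure is the step that converts the problem into one where standard real interpolation applies; without it (or without a genuinely verified weak-endpoint complex interpolation theorem), your argument has a gap at its central step.
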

 \begin{proof}
  Put $d\mu(\xi)=\varphi^2(\xi)\;d\xi$ and let  $T$ be the operator  defined  on $ \ell^1(\mathbb{Z}^d)$ by
  $$ T(f) =\frac{ \mathcal{F}_{\mathbb{Z}^d}(f)}{\varphi}\,.$$
 Noting   that $T(f)$ is well defined $\mu$-almost everywhere on $(0,1)^d$, since we have that  $\mu (\{\xi\in (0,1)^d,\;\; \varphi(\xi)=0\})=0$.
 In fact, for $s>0$ we have
 \begin{eqnarray*}
 \mu (\{\xi\in (0,1)^d,\;\; \varphi(\xi)\leq s\})&=& \int_{\{\xi\in (0,1)^d,\;\; \varphi(\xi)\leq s\}}\varphi(\xi)^2\,d\xi
 \\&=&2\int_{\{\xi\in (0,1)^d,\;\;  \varphi(\xi)\leq s\}}\int_{ 0\leq t\leq \varphi(\xi)  }t\,dt\,d\xi
 \\&\leq &2\int_0^s\int_{\{\xi\in (0,1)^d,\;\;   t\leq \varphi(\xi) \}}t\,dt\,d\xi
 \\&\leq &2c\;\int_0^s dt =2cs
 \end{eqnarray*}
which  implies that $\mu (\{\xi\in (0,1)^d,\;\; \varphi(\xi)=0\})\leq 2cs$,  for all $s>0$.
\par Now   for $s>0$ and $f\in \ell^1(\mathbb{Z}^d)$  we have
\begin{eqnarray*}
\mu\,(\lbrace \xi\in (0,1)^d,\;\; |T(f)(\xi)|\geq s \rbrace )&\leq   \mu\,\left( \left\{ \xi\in (0,1)^d,\;\;  \varphi(\xi)\leq \frac{\|f\|_{\ell^1}}{s} \right\}\right)  \leq   \dfrac{2c \|f \|_{\ell^1 }}{s}\,.
\end{eqnarray*}
 Hence   $T$ is of weak type $(1.1)$.  In addition, from Plancherel Theorem
$$ \mu(\lbrace\xi\in (0,1)^d,\;\; |T(f)(\xi)|\geq s \rbrace) \leq  \dfrac{\Vert f \Vert_{ \ell^2 }^2  }{s^2} \,,  $$
which mean that $T$ is of weak type (2,2). We  thus obtain  Lemma \ref{l1}  by using  Marcinkiewicz interpolation Theorem.
\end{proof}
\begin{lem}\label{2.6}
If $ \varphi $ satisfies  (\ref{phi}) and  $ 1<p<r<p^{'}<\infty $, then we have
  $$\Big( \int_{  (0,1)^d}  \Big| \mathcal{F}_{\mathbb{Z}}(f)(\xi)(\varphi(\xi))^{(1/r-1/p^{'})} \Big|^{r}d\xi \Big)^{1/r} \leq C_{p} \Vert f \Vert_{ \ell^p }; \qquad           f\in \ell^p(\mathbb{Z}^d). $$
\end{lem}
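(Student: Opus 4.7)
The plan is to obtain the estimate by H\"{o}lder interpolation between the two extreme cases $r = p$ and $r = p'$. At $r = p$ the integrand becomes $|\mathcal{F}_{\mathbb{Z}^d}(f)|^{p}\varphi^{2-p}$ and is controlled by Lemma \ref{l1}, while at $r = p'$ the weight power $r(1/r - 1/p') = 1 - r/p'$ vanishes and the bound reduces to the Hausdorff--Young inequality (\ref{12}). Thus the two endpoints are already in hand, and one only needs a single Hölder estimate to fill the interval $p < r < p'$.

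I would first rewrite the quantity to estimate as
$$I(f) = \int_{(0,1)^d} \bigl|\mathcal{F}_{\mathbb{Z}^d}(f)(\xi)\bigr|^{r}\,\varphi(\xi)^{1 - r/p'}\,d\xi,$$
using the identity $r(1/r - 1/p') = 1 - r/p'$, and then look for a factorisation
$$\bigl|\mathcal{F}_{\mathbb{Z}^d}(f)\bigr|^{r}\varphi^{1 - r/p'} = \Bigl(\bigl|\mathcal{F}_{\mathbb{Z}^d}(f)\bigr|^{p}\varphi^{2-p}\Bigr)^{a}\Bigl(\bigl|\mathcal{F}_{\mathbb{Z}^d}(f)\bigr|^{p'}\Bigr)^{b}$$
with $a,b>0$ and $a + b = 1$. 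Matching the exponent of $|\mathcal{F}_{\mathbb{Z}^d}(f)|$ forces $pa + p'b = r$, and matching the exponent of $\varphi$ forces $(2-p)a = 1 - r/p'$. Solving these together with $a + b = 1$ yields $a = (p' - r)/(p' - p)$ and $b = (r - p)/(p' - p)$, both in $(0,1)$ since $p < r < p'$; the three linear conditions are simultaneously compatible thanks to the identity $(2-p)/(p'-p) = 1/p'$, which is a direct consequence of $1/p + 1/p' = 1$.

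Once the factorisation is in place, H\"{o}lder's inequality with conjugate exponents $1/a$ and $1/b$ gives
$$I(f) \leq \Bigl(\int_{(0,1)^d} \bigl|\mathcal{F}_{\mathbb{Z}^d}(f)\bigr|^{p}\varphi^{2-p}\,d\xi\Bigr)^{a}\Bigl(\int_{(0,1)^d} \bigl|\mathcal{F}_{\mathbb{Z}^d}(f)\bigr|^{p'}\,d\xi\Bigr)^{b}.$$
Applying Lemma \ref{l1} to the first factor and the Hausdorff--Young bound (\ref{12}) to the second, and using $pa + p'b = r$, I would obtain $I(f) \leq c_{p}^{pa}\,\|f\|_{\ell^{p}}^{r}$; taking $r$-th roots then delivers the claim with $C_{p} = c_{p}^{pa/r}$.

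I do not anticipate any genuine obstacle: the argument is a short H\"{o}lder interpolation once the correct factorisation has been identified, and the only piece of bookkeeping is verifying that the three linear conditions on $(a,b)$ are simultaneously solvable with $a, b \in (0,1)$ --- something that fails precisely at the endpoints $r = p$ and $r = p'$, both of which are covered by the two input estimates.
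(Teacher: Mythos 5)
Your proposal is correct and is essentially the paper's own argument: the paper also writes the integrand as $|\mathcal{F}_{\mathbb{Z}^d}(f)|^{r}\varphi^{1-r/p'}$, applies H\"{o}lder's inequality with the exponent $a=(p'-p)/(p'-r)$ (the reciprocal of your $a$, since it uses conjugate exponents $a,a'$ rather than weights summing to $1$), and then invokes Lemma \ref{l1} and the Hausdorff--Young inequality (\ref{12}) on the two factors. The exponent bookkeeping you carry out, including the compatibility identity $(2-p)/(p'-p)=1/p'$ coming from $1/p+1/p'=1$, matches the paper's stated relations $\frac{p}{a}+\frac{p'}{a'}=r$, $(1-\frac{r}{p'})a=2-p$, $(r-\frac{p}{a})a'=p'$.
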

 \begin{proof}
  Put $ a=(p^{'}-p )/ (p^{'}-r)$ and $a^{'}$ it's conjugate. We note the following
  $$  \frac{p}{a}+\frac{p'}{a'}=r, \quad\left(1- \frac{r}{p'}\right)a = 2-p ,\quad  \left(r- \frac{p}{a}\right)a'= p'. $$  Then using Holder's inegality,(2.5) and the Hausdorff-Young inegality (\ref{12})
\begin{eqnarray*}
& &\Big(  \int_{(0,1)^d}  | \mathcal{F}_{\mathbb{Z}^d}(f)(\xi) |^{r} |\varphi(\xi)|^{(1-r/p^{'})} d\xi \Big)^{1/r} \\
& \leq &    \Big( \int_{(0,1)^d}  | \mathcal{F}_{\mathbb{Z}^d}(f)(\xi) |^{p} |\varphi(\xi)|^{(2-p)} d\xi \Big)^{1/ra}  \Big( \int_{(0,1)^d} | \mathcal{F}_{\mathbb{Z}^d}(f)(\xi) |^{p^{'}} d\xi \Big)^{1/ra^{'}} \\
& \leq &  c_{p} \Vert f \Vert_{ \ell^p }
\end{eqnarray*}
which is the desired statement.
\end{proof}
\begin{proof}[Proof of Theorem \ref{th1}]
Assume first that $ p \leq q^{'} $ and let $ \varphi = |m|^{ \alpha}$. Clearly from  (\ref{LL})  the function      $ \varphi $  satisfies  the condition (\ref{phi}).   Hence using    Lamma \ref{2.6} with   $ r = q^{'} $   and  the fact that
 $ 1/p - 1/q = 1 / q^{'} - 1/p^{'} = 1/\alpha, $ we obtain that
  \begin{equation}\label{f1}
  \left(\int_{(0,1)^d} |m(\xi) \mathcal{F}_{\mathbb{Z}^d} (f)(\xi)|^{q'}d\xi\right)^{1/q'} \leq c  \|f\|_{ \ell^p }.
    \end{equation}
Now the Hausdorff-Young inegality ( \ref{13} ) implies
$$\|T_m(f)\|_{ \ell^q} \leq \|m \mathcal{F} (f)\|_{q'} \leq c_{p} \|f\|_{ \ell^p }.$$
When $ q^{'} < p = (p^{'})^{'} $, we can apply the similar argument to the adjoint operator $ T^{*}_{m} = T_{\overline{m}} $, since $ 1<q^{'} \leqslant 2 \leqslant p^{'} < \infty $ and $ 1/q^{'} -1/p^{'} = 1/\alpha $. Hence by duality it follows that
$$\|T_{m}(f)\|_{\ell^q }  \leq c_{p} \|f \|_{\ell^p }.$$
This finishes the proof of Theorem \ref{th1}.
\end{proof}
 \begin{cor}
   If  $m$  satifies (\ref{mc}) with   $ 0<r<d$
then  $T_m$ is bounded from  $\ell^{p}(\mathbb{Z}^d)$ into $ \ell^{q}(\mathbb{Z}^d)$, provided that
$$1<p\leq2\leq q<\infty \quad,\quad \dfrac{1}{p}-\dfrac{1}{q}=  \frac{r}{d}.$$
 \end{cor}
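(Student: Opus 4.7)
The plan is to derive the corollary directly from Theorem \ref{th1} by verifying that the pointwise decay hypothesis (\ref{mc}) forces $m$ to lie in the weak Lebesgue space $L^{d/r,\infty}(\mathbb{T}^d)$, and then applying the theorem with $\alpha = d/r$. Note that the condition $0 < r < d$ corresponds exactly to $\alpha = d/r > 1$, which is the range covered by Theorem \ref{th1}.

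First, I would fix $s > 0$ and examine the level set $E_s = \{\xi \in (0,1)^d : |m(\xi)| \geq s\}$. The assumption $|m(\xi)| \leq c|\xi|^{-r}$ gives that if $\xi \in E_s$ and $\xi \neq 0$ then $c|\xi|^{-r} \geq s$, i.e. $|\xi| \leq (c/s)^{1/r}$. Consequently $E_s$ is contained in the ball $B(0, (c/s)^{1/r}) \cap (0,1)^d$, whose $d$-dimensional Lebesgue measure is bounded by $C_d (c/s)^{d/r} = C' s^{-d/r}$. This is precisely the weak-type estimate (\ref{LL}) with exponent $\alpha = d/r$, so $m \in L^{d/r,\infty}(\mathbb{T}^d)$.

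Since $0 < r < d$ gives $\alpha = d/r > 1$, Theorem \ref{th1} applies and yields boundedness of $T_m$ from $\ell^p$ into $\ell^q$ whenever $1 < p \leq 2 \leq q < \infty$ and $1/p - 1/q = 1/\alpha = r/d$, which is the claim. There is no serious obstacle here: the only thing to be careful about is the trivial null set $\{\xi = 0\}$, which does not affect either the pointwise bound or the distribution function, so the computation above is complete.
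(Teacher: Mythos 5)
Your proof is correct and follows exactly the route the paper intends: the pointwise bound (\ref{mc}) places $m$ in $L^{d/r,\infty}(\mathbb{T}^d)$ via the level-set computation (which the paper asserts without detail just before Theorem \ref{th1}), and then Theorem \ref{th1} with $\alpha=d/r>1$ gives the conclusion. No issues.
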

Now   observe that the inequality (\ref{LL}) can be restricted  only to $s\geq1$  which implies that  $L^{\alpha,\infty}(\mathbb{T}^d)\subset L^{\beta,\infty}(\mathbb{T}^d)$  for all $1 < \beta\leq \alpha$. Thus  one  can state
\begin{cor}
If   $m\in L^{\alpha,\infty}(\mathbb{T}^d)$ with  $\alpha > 1$
  then  the operator $T_m$ is bounded from  $\ell^{p}(\mathbb{Z}^d)$ into $ \ell^{q}(\mathbb{Z}^d)$, provided that
$$1<p\leq2\leq q<\infty \quad,\quad \dfrac{1}{p}-\dfrac{1}{q}\geq \frac{1}{\alpha}.$$
 \end{cor}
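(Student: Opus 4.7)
The plan is to reduce this statement to Theorem \ref{th1} by replacing the exponent $\alpha$ with a smaller one that exactly matches the Sobolev relation. Set
$$\frac{1}{\beta}=\frac{1}{p}-\frac{1}{q}.$$
Since $1<p\leq 2\leq q<\infty$ we have $0<1/p-1/q<1/p<1$, so $\beta>1$. The hypothesis $1/p-1/q\geq 1/\alpha$ reads $1/\beta\geq 1/\alpha$, i.e.\ $\beta\leq\alpha$. Thus $1<\beta\leq\alpha$ and the exponents $(p,q,\beta)$ satisfy exactly the hypotheses of Theorem \ref{th1}.

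The remaining task is to justify the inclusion $L^{\alpha,\infty}(\mathbb{T}^d)\subset L^{\beta,\infty}(\mathbb{T}^d)$ mentioned in the remark immediately preceding the corollary. To do this I would first observe that because $\mathbb{T}^d$ has finite (Haar) measure equal to $1$, the distributional inequality (\ref{LL}) is automatic for $0<s<1$ up to enlarging the constant: indeed the set appearing in (\ref{LL}) always has measure at most $1$, and $1\leq 1/s^{\beta}$ for $s\leq 1$. Hence it suffices to verify (\ref{LL}) with exponent $\beta$ and some new constant $c'$ only in the range $s\geq 1$. But for $s\geq 1$ and $\beta\leq\alpha$ we have $1/s^{\alpha}\leq 1/s^{\beta}$, so the inequality valid for $\alpha$ directly yields one for $\beta$, showing $m\in L^{\beta,\infty}(\mathbb{T}^d)$.

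Having established $m\in L^{\beta,\infty}(\mathbb{T}^d)$ with $\beta>1$ and $1/p-1/q=1/\beta$, Theorem \ref{th1} applies verbatim and gives
$$\|T_m(f)\|_{\ell^q}\leq C\,\|f\|_{\ell^p},\qquad f\in\ell^p(\mathbb{Z}^d),$$
which is the desired conclusion. I do not expect any genuine obstacle here: the entire content is the monotonicity of the weak-type scale of spaces over a probability measure space, and the rest is a direct invocation of the theorem proved just above.
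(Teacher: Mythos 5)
Your proposal is correct and follows essentially the same route as the paper: the remark preceding the corollary notes that the finite measure of $\mathbb{T}^d$ makes (\ref{LL}) automatic for $s<1$, giving the inclusion $L^{\alpha,\infty}(\mathbb{T}^d)\subset L^{\beta,\infty}(\mathbb{T}^d)$ for $1<\beta\leq\alpha$, after which Theorem \ref{th1} is applied with $1/\beta=1/p-1/q$. Your write-up merely makes this inclusion argument explicit, which is a faithful (and slightly more detailed) rendering of the paper's own reasoning.
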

 \par We now study   $L^p$-boundedness of  the  multiplier operator $T_m$. We begin by   the following:
  \begin{thm}\label{t11}
    If $m$ is a $C^{d+1} $- function  on $\mathbb{T}^d$ then $T_m$ is a bounded operator from  $\ell^{p}(\mathbb{Z}^d)$ into
     itself  for all
    $1\leq p \leq \infty$.
   \end{thm}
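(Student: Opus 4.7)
The plan is to show that $T_m$ coincides with convolution by a kernel $K\in\ell^1(\mathbb{Z}^d)$; once this is established, Young's inequality (\ref{ygin}) applied with $q=1$, $r=p$ immediately gives
$$\|T_m f\|_{\ell^p}=\|K*_{\mathbb{Z}^d}f\|_{\ell^p}\leq \|K\|_{\ell^1}\|f\|_{\ell^p}$$
for every $1\leq p\leq \infty$.

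To identify the kernel, I would set $K=\mathcal{F}_{\mathbb{Z}^d}^{-1}(m)$, i.e.\
$$K(n)=\int_{\mathbb{T}^d}m(\xi)\,e^{-2i\pi n\cdot\xi}\,d\xi,\qquad n\in\mathbb{Z}^d.$$
Since $m\in L^2(\mathbb{T}^d)$, this is well defined and $K\in \ell^2(\mathbb{Z}^d)$; the relation $\mathcal{F}_{\mathbb{Z}^d}(T_m f)=m\,\mathcal{F}_{\mathbb{Z}^d}(f)=\mathcal{F}_{\mathbb{Z}^d}(K)\mathcal{F}_{\mathbb{Z}^d}(f)$ then forces $T_m f=K*_{\mathbb{Z}^d}f$ on $\ell^1\cap\ell^2$, and in particular defines $T_m$ consistently on all $\ell^p$ via convolution once we control $K$.

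The main step, and the only real piece of work, is to prove that $K\in\ell^1(\mathbb{Z}^d)$. I would do this by integration by parts. Fix $n\in\mathbb{Z}^d\setminus\{0\}$ and pick an index $j=j(n)$ with $|n_j|\geq |n|/\sqrt{d}$. Using $\partial_{\xi_j}^{d+1}e^{-2i\pi n\cdot\xi}=(-2i\pi n_j)^{d+1}e^{-2i\pi n\cdot\xi}$ and integrating by parts $d+1$ times in $\xi_j$ against $m$ — the boundary terms drop because $m$ and its derivatives up to order $d+1$ are periodic by the $C^{d+1}(\mathbb{T}^d)$ hypothesis — one obtains
$$(2\pi n_j)^{d+1}\,K(n)=\pm i^{\,d+1}\int_{\mathbb{T}^d}\partial_{\xi_j}^{d+1}m(\xi)\,e^{-2i\pi n\cdot\xi}\,d\xi,$$
and hence
$$|n|^{d+1}|K(n)|\leq C_d\,\max_{1\leq j\leq d}\|\partial_{\xi_j}^{d+1}m\|_{L^\infty(\mathbb{T}^d)}.$$
Combining this with the trivial estimate $|K(0)|\leq \|m\|_{L^\infty}$ and summing,
$$\|K\|_{\ell^1}\leq \|m\|_{L^\infty}+C_d\,\|m\|_{C^{d+1}}\sum_{n\in\mathbb{Z}^d\setminus\{0\}}|n|^{-(d+1)}<\infty,$$
where the last series converges precisely because $d+1>d$; this is exactly why the order of regularity $d+1$ appears in the statement.

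I expect no serious obstacle. The only points demanding a little care are (i) verifying that periodicity kills all the boundary terms at each of the $d+1$ integrations by parts, and (ii) ensuring the exponent $d+1$ is chosen correctly so that $\sum_{n\neq 0}|n|^{-(d+1)}$ converges. The $p=\infty$ case, which is not accessible through the $L^2$ multiplier framework, is handled directly by the convolution formula $T_m f=K*_{\mathbb{Z}^d}f$ since $K\in\ell^1$.
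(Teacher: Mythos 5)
Your proposal is correct and follows essentially the same route as the paper: represent $T_m$ as convolution with $K=\mathcal{F}_{\mathbb{Z}^d}^{-1}(m)$, integrate by parts $d+1$ times (with boundary terms vanishing by periodicity) to get summable decay of $K$, conclude $K\in\ell^1(\mathbb{Z}^d)$, and finish with Young's inequality. The only cosmetic difference is that you take $d+1$ pure derivatives in the dominant coordinate to get $|K(n)|\lesssim |n|^{-(d+1)}$, whereas the paper uses mixed derivatives and a geometric mean to get $|K(n)|\lesssim \prod_i(1+|n_i|)^{-(1+1/d)}$; both bounds are summable over $\mathbb{Z}^d$.
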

   \begin{proof}
      We note first that  the kernel of $T_m$ is given by
     $$K(n)=\int_{(0,1)^d}m(\xi)e^{-2i\pi\xi.n}\;d\xi, \quad n\in \mathbb{Z}^d.$$
    Using integrations by parts we have
     $$|n_1^{\gamma_1}...n_d^{\gamma_d}K(n)|= \left|
     \int_{(0,1)^d} \partial^{\gamma_1}_{\xi_1}...\partial^{\gamma_d}_{\xi_d}m (\xi)\;e^{-2i\pi\xi.n}\;d\xi\right|
     \leq c$$
   for all  $ \gamma_1,...,\gamma_d \in \mathbb{N}$ with  $ \gamma_1+...+\gamma_d\leq d+1$ .
  It follows that
  $$ |K(n)|\leq \frac{c}{(1+|n_1|)...(1+|n_j|)^2...(1+|n_d|)\;}$$
  for all $j=1,...,d$.
   By varying $j$ from $1$ to $d$ we deduce the following estimate
    $$ |K(n)|\leq \frac{c}{\Big((1+|n_1|)...(1+|n_j|) ...(1+|n_d|)\Big)^{1+1/d}\;}$$
  which implies that the kernel $K$ is in $\ell^1(\mathbb{Z}^d)$. This yields the result.
   \end{proof}
Our main result is  the following   H\"{o}rmander-Mihlin type multiplier theorem  where  we may consider   $T_m$  as a  Calder\'{o}n-Zygmund operator.
\begin{thm}\label{th2}
   Let  $m$ be a bounded function on the torus  $\mathbb{T}^d$. We assume that $m$ is $C^{d+1}$-function on $\mathbb{R}^d\setminus \mathbb{Z}^d$  and satisfies the Mikhlin condition,
   \begin{equation}\label{mik}
    |\partial^\alpha_\xi m(\xi) |\leq c|\xi|^{-|\alpha|}, \qquad  \xi\in (-1/2,1/2)^d
  \end{equation}
    for all $\alpha=(\alpha_1,...,\alpha_d)\in \mathbb{N}^d$ with $|\alpha|=\alpha_1+...+\alpha_d\leq d+1$. Then $T_m$ extended to a bounded operator from $\ell^p$ into itself for all $1<p <\infty$.
 \end{thm}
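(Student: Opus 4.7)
The plan is to view $T_m$ as a Calderón-Zygmund operator on the discrete measure space $(\mathbb{Z}^d,\#,|\cdot|)$ and mirror the classical proof of the Mikhlin-Hörmander multiplier theorem on $\mathbb{R}^d$. The scheme has four parts: $\ell^2$-boundedness, size and smoothness estimates on the convolution kernel, a Calderón-Zygmund decomposition yielding the weak-type $(1,1)$ bound, and finally Marcinkiewicz interpolation together with duality for the exponents $p\geq 2$.

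First, $\|T_m f\|_{\ell^2}\leq \|m\|_\infty\|f\|_{\ell^2}$ is immediate from Plancherel, since $\mathcal{F}_{\mathbb{Z}^d}$ is an isometry from $\ell^2(\mathbb{Z}^d)$ onto $L^2(\mathbb{T}^d)$. The core of the argument is then to establish the Calderón-Zygmund kernel estimates
\begin{equation*}
|K(n)|\leq \frac{C}{|n|^d}, \qquad |K(n+e_j)-K(n)|\leq \frac{C}{|n|^{d+1}}\quad (j=1,\ldots,d)
\end{equation*}
for $|n|\geq 2$, where $K(n)=\int_{(-1/2,1/2)^d} m(\xi)\,e^{-2i\pi n\cdot\xi}\,d\xi$. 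I would split $m=(1-\chi)m+\chi m$ with $\chi$ a smooth cutoff supported in $(-1/2,1/2)^d$ that equals $1$ near $0$; the smooth piece $(1-\chi)m$ extends to a $C^{d+1}$ function on $\mathbb{T}^d$ and Theorem~\ref{t11} handles the corresponding multiplier directly. For the singular piece $\chi m$, I would perform a Littlewood-Paley decomposition $\chi m=\sum_{j\geq 0} m_j$, with $m_j$ supported in an annulus of scale $2^{-j}$. The Mikhlin hypothesis (\ref{mik}) gives $|\partial^\alpha m_j(\xi)|\leq C\,2^{j|\alpha|}$ for $|\alpha|\leq d+1$, and integrating by parts $d+1$ times in the variable $\xi_i$ corresponding to the largest component $|n_i|\geq |n|/\sqrt d$ in
\begin{equation*}
K_j(n):=\int_{\mathbb{T}^d} m_j(\xi)\,e^{-2i\pi n\cdot\xi}\,d\xi
\end{equation*}
yields $|K_j(n)|\leq C\min\!\bigl(2^{-jd},\,2^j/|n|^{d+1}\bigr)$. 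Summing over $j$, with the two bounds balancing at $2^j\sim|n|$, produces the size estimate. The smoothness estimate is obtained by the same scheme applied to the symbol $m_j(\xi)(e^{-2i\pi e_j\cdot\xi}-1)$, which is supported in the same annulus but carries an extra factor $|\xi|\sim 2^{-j}$; this lowers both bounds by $2^{-j}$ and, after summation, delivers $|K(n+e_j)-K(n)|\leq C|n|^{-d-1}$.

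With these kernel bounds, the classical Calderón-Zygmund decomposition of $f\in\ell^1$ at height $\lambda>0$ into a good function $g$ (controlled by the $\ell^2$ bound) and a bad part $b=\sum_Q b_Q$ supported on disjoint dyadic cubes of side at least $1$ goes through verbatim; the kernel regularity controls the contribution of $b$ outside the doubled cubes and yields
\begin{equation*}
\#\{n\in\mathbb{Z}^d:|T_m f(n)|>\lambda\}\leq \frac{C}{\lambda}\|f\|_{\ell^1}.
\end{equation*}
Marcinkiewicz interpolation with the $\ell^2$ bound then gives $\ell^p$-boundedness for $1<p\leq 2$. The adjoint $T_m^{*}=T_{\overline m}$ satisfies the same Mikhlin condition, hence is bounded on $\ell^{p'}$, and duality extends the result to $2\leq p<\infty$.

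The main obstacle is deriving the two kernel estimates from (\ref{mik}); the Littlewood-Paley decomposition with iterated integration by parts handles this, but requires some care to verify convergence of the dyadic sums and to arrange the cutoff $\chi$ so that the singularity at $0$ is cleanly separated from the smooth part. A minor subtlety is that balls of radius $r<1$ in $\mathbb{Z}^d$ are trivial, but this is harmless since the kernel bounds are only needed for $|n|\geq 2$ and the Calderón-Zygmund decomposition uses cubes of side $\geq 1$; once the estimates are in place, the remaining interpolation and duality steps are standard.
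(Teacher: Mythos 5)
Your proposal follows essentially the same route as the paper: the same cutoff splitting of $m$ into a smooth piece (handled by Theorem~\ref{t11}) and a singular piece, the same dyadic Littlewood--Paley decomposition with integration by parts giving $|K_j(n)|\leq C\min(2^{-jd},2^{j}|n|^{-d-1})$, and the same Calder\'on--Zygmund conclusion (the paper verifies the integral H\"ormander condition $\sum_{|r|\geq 2|s|}|K(r-s)-K(r)|\leq c$ and invokes the Coifman--Weiss homogeneous-space theory rather than redoing the CZ decomposition, but that is the same mechanism). The one point to be careful about is your smoothness estimate: lowering both bounds by $2^{-j}$ gives $|K_j(n+e_j)-K_j(n)|\leq C\min\bigl(2^{-j(d+1)},|n|^{-d-1}\bigr)$, and summing the second bound over the roughly $\log_2|n|$ scales with $2^j<|n|$ yields $C(\log|n|)\,|n|^{-d-1}$, not $C|n|^{-d-1}$; this logarithm propagates into the H\"ormander sum and must be removed, e.g.\ by estimating $\nabla\mathcal{K}_j$ with one extra power of $|x|$ so the small-$j$ sum becomes geometric (this is exactly what the paper does via the continuous extension $\mathcal{K}$ and the mean value theorem, though at the implicit cost of a derivative of order $d+2$, so the issue is symmetric between the two write-ups and is the standard reason textbook versions of this pointwise argument assume $d+2$ derivatives).
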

 \begin{proof}
Taking $\psi$ a  $C^\infty$-function  on $\mathbb{R}^d $ such that $\psi(\xi)=1$ for $|\xi|\leq 1/16$ and $\psi(\xi)=0$ for $|\xi|\geq 1/8$.
In $(-1/2,1/2)^d$ we split $m$ into
$$m=(1-\psi)m+m\psi=m_1+m_2.$$
Since $m_1=m$ near the sides  $|\xi_j|=1/2$, then $m_1$ can be extended to $C^{d+1}$-function  on $\mathbb{T}^d$ and by  Theorem \ref{t11} the operator $T_{m_1}$ is bounded on
$\ell^p$  for all $1\leq p\leq \infty$. It is therefore enough to  prove   boundedness of $T_{m_2}$.  Introduce   the function   $\mathcal{K}$  by
$$\mathcal{K}(x)= \int_{ (-1/2,1/2)^d}m_2(\xi)e^{-i2\pi\xi.x}d\xi, \qquad x\in \mathbb{R}^d  $$
and $K$ its restriction to $\mathbb{Z}^d$. One can write
$$T_{m_2}(f)=K*_{\mathbb{Z}^d}f, \qquad f\in \ell^1.$$
 Our aim  is   to prove that $T_{m_2}$ is a Calderon-Zygmund operator. We consider here $\mathbb{Z}^d$ as a  space of homogeneous type in the sense of Coifman and Weiss \cite{CW},    equipped  with the Euclidean metric   $ (r,s)\rightarrow |r-s| $ and the counting  measure. Precisely, we  will prove that the   kernel $K$ satisfies  the integral H\"{o}rmander condition: there exists constant $c>0$ such that for all $s\in \mathbb{Z}^d$,
 \begin{equation}\label{HC}
   \sum_{r\in \mathbb{Z}^d,\; |r|\geq 2|s|} |K(r-s)-K(r)|\leq c.
 \end{equation}
\par To begin, let  $\phi$ be    a  $C^\infty$- function on $\mathbb{R}$,  such that $supp(\phi)\subset \{t\in \mathbb{R};\: \; 1/2\leq |t|\leq 2\}$ and
$$\sum_{j\in  \mathbb{Z}}\phi(2^{-j}t)=1,\qquad t\neq0.$$
So we have
 $$\sum_{j=3}^\infty\phi(2^{-j}|\xi|)=1,\qquad |\xi|\leq \frac{1}{8}.$$
 and  we may write $$m_2(\xi)=\sum_{j=3}^\infty m_2(\xi)\phi(2^{-j}|\xi|)=\sum_{j=3}^\infty m_j (\xi), \qquad \xi\in (-1/2,1/2)^d.$$
Notice that
\begin{equation}\label{411}
  \sum_{j=3}^\infty |m_j(\xi)|\leq c
\end{equation}
for some constant $c>0$, since this sum contains at most  three non-null terms.
We now set
$$\mathcal{K}_j(x)= \int_{ (-1/2,1/2)^d}m_j(\xi)e^{-i2\pi\xi.x}d\xi, \qquad x\in \mathbb{R}^d. $$
By  Fubini's Theorem  and (\ref{411}) the sum
$\sum_j \mathcal{K}_j$ converges and
$$\sum_{j=3}^\infty \mathcal{K}_j(x)=\int_{ (-1/2,1/2)^d}\sum_{j=3}^\infty m_j(\xi)e^{-i2\pi\xi.x}d\xi=\int_{ (-1/2,1/2)^d}m_2(\xi)e^{-i2\pi\xi.x}d\xi= \mathcal{K}(x),$$
Next we shall give estimates of the kernels  $\mathcal{K}$. Observe first   that $m_2 $ satisfies  the condition $(\ref{mik})$  and  from  this  estimate    and the compactness of $supp(\varphi)$  one can    obtain the following
$$|\partial^{\alpha}m_j(\xi)|  \leq c\;2^{-j|\alpha|};\quad\text{and}\quad  |\partial^{\alpha}(\;\xi_im_j(\xi)\;)|  \leq c\;2^{-j(|\alpha|-1)};
\;\;
 $$
 for $ i=1,...,d$ and for $|\alpha|\leq d+1$. It follows that
 \begin{equation}\label{141}
  |x|^s|\mathcal{K}_j(x)| \leq c \sum_{|\alpha|=s}\|\partial^\alpha m_j\|_{L^1}\leq  c\; 2^{j(d-s)}
 \end{equation}
 and
 \begin{equation}\label{142}
  |x|^s  \left| \frac{\partial \mathcal{K}_j}{\partial x_i}(x) \right |\leq c \sum_{|\alpha|=s}\|\partial^\alpha (\xi_i m_j(\xi))\|_{L^1}\leq  c\; 2^{j(d+1-s)}.
  \end{equation}
 Using (\ref{141}) with $s=0$ and $s=d+1$ we get
$$\sum_{j=3}^\infty|\mathcal{K}_j(x)|=\sum_{2^j\leq |x|^{-1}}|\mathcal{K}_j(x)|+\sum_{2^j> |x|^{-1}}|\mathcal{K}_j(x)|\leq c |x|^{-d}.$$
Similarly by  (\ref{142})  with $s=0$ and $s=d+2$
$$\sum_{j=3}^\infty\left|\frac{\partial \mathcal{K}_j}{\partial x_i}(x)\right|=\sum_{2^j\leq |x|^{-1}}\left|\frac{\partial \mathcal{K}_j}{\partial x_i}(x)\right|
+\sum_{2^j> |x|^{-1}}\left|\frac{\partial \mathcal{K}_j}{\partial x_i}(x)\right|\leq c |x|^{-d-1}.$$
Therefore we obtain the following estimates
\begin{equation}\label{KK}
  |\mathcal{K}(x) | \leq c (1+|x|)^{-d} \qquad  \left|\frac{\partial \mathcal{K}}{\partial x_i}(x)\right| \leq c (1+|x|)^{-d-1}, \quad i=1,...,d.
\end{equation}
Note that  $\mathcal{K}$ is a $C^\infty$-function and all of its derivatives are bounded.
\par Now we come to the proof of (\ref{HC}).  Let  $r,s\in \mathbb{Z}^d$ with $|r|\geq 2|s|$. By mean value Theorem  we have
\begin{eqnarray*}
 |K(r-s)-K(r)|= |\mathcal{K}(r-s)-\mathcal{K}(r)|&\leq& |s|\int_{0}^1\sum_{i=1}^d\left|\frac{\partial \mathcal{K}}{\partial x_i}( r -ts)\right|dt.
\end{eqnarray*}
Using (\ref{KK}) and the fact that
$$|r -ts|\geq |r|-|s|\geq \frac{|r|}{2}$$
 we obtain the following
   \begin{eqnarray}\label{KKK}
 |K(r-s)-K(r)|\leq  \frac{c\;|s|}{(1+|r|)^{d+1}}.
 \end{eqnarray}
 Now use that
 $$\max_{1\leq i\leq d}(|r_i|)\leq|r|\leq d^{1/2}\max_{1\leq i\leq d}(|r_i|),$$
 we have
$$\sum_{|r|\geq 2|s|} \frac{1}{(1+|r|)^{d+1}}\leq c\; \sum_{\max_{1\leq i\leq d}(|r_i|)\geq 2d^{-1/2}|s|} \frac{1}{(1+\max_{1\leq i\leq d}(|r_i|) )^{d+1}}$$
It is not hard to see that
$$\sum_{r\in \mathbb{Z}^d,\; \max_{1\leq i\leq d}(|r_i|)=k} \leq c k^{d-1}.$$
and from which
\begin{equation}\label{KKKK}
 \sum_{|r|\geq  2d^{-1/2}|s|} \frac{1}{(1+|r|)^{d+1}}\leq c \sum_{k\geq  2d^{-1/2}|s|}^\infty\frac{1}{k^2}\leq \frac{c}{|s|}.
 \end{equation}
 Combine (\ref{KKKK}) with (\ref{KKK}),  yield that $$  \sum_{|r|\geq 2|s|}|K(r-s)-K(r)|   \leq c$$
which proves  (\ref{HC}).  The proof of Theorem \ref{th2} follows.
\end{proof}
In the next we shall be concerned with   Mikhlin  type  multiplier on $\mathbb{Z}$. Thus one can read Theorem \ref{th2} as follows
\begin{thm}\label{th11}
 If  $m$ is  a bounded $C^2-$function on $(0,1)$ such that
 \begin{equation}\label{M12}
 ( \xi  (1-\xi))^k| m^{(k)}(\xi)|\leq c;\qquad 0\leq k\leq 2
  \end{equation}
   then $T_m$ is a bounded operator from $\ell^p(\mathbb{Z})$ into $\ell^p(\mathbb{Z})$ for  $1<p<\infty$.
\end{thm}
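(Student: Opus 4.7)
The plan is to reduce Theorem \ref{th11} to a direct application of the one-dimensional case of Theorem \ref{th2}. The hypothesis (\ref{M12}) looks slightly different from the Mikhlin condition (\ref{mik}) because it encodes a singularity at both endpoints $\xi=0$ and $\xi=1$ of $(0,1)$, whereas (\ref{mik}) is stated on the symmetric interval $(-1/2,1/2)$ with a single singularity at the origin. On the torus $\mathbb{T}=\mathbb{R}/\mathbb{Z}$ these two pictures coincide, so the task is essentially to rewrite the hypothesis in the form required by Theorem \ref{th2}.

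First, I would extend $m$ to a $1$-periodic function on $\mathbb{R}$, which is automatically $C^2$ on $\mathbb{R}\setminus \mathbb{Z}$ since it is $C^2$ on $(0,1)$ by assumption, and then restrict to $(-1/2,1/2)$. For $\xi\in(0,1/2)$ one has $1-\xi\geq 1/2$, so
\[
|\xi|^k\,|m^{(k)}(\xi)|\;\leq\;\frac{\xi^k}{(\xi(1-\xi))^k}\cdot c \;\leq\; 2^k c,
\qquad k=0,1,2.
\]
For $\xi\in(-1/2,0)$ I would set $\xi'=\xi+1\in(1/2,1)$; by periodicity $m^{(k)}(\xi)=m^{(k)}(\xi')$, and since $1-\xi'=|\xi|$ together with $\xi'\geq 1/2$ I get
\[
|\xi|^k\,|m^{(k)}(\xi)|\;=\;|\xi|^k\,|m^{(k)}(\xi')|\;\leq\;\frac{|\xi|^k}{(\xi'(1-\xi'))^k}\cdot c\;\leq\; 2^k c,
\]
again for $k=0,1,2$. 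Thus the Mikhlin condition (\ref{mik}) in dimension $d=1$ (which requires $|\alpha|\leq d+1=2$) is verified on all of $(-1/2,1/2)\setminus\{0\}$.

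Having checked that $m$ is bounded, $C^2$ on $\mathbb{R}\setminus\mathbb{Z}$, and satisfies (\ref{mik}) with $d=1$, I would invoke Theorem \ref{th2} directly to conclude that $T_m$ extends to a bounded operator from $\ell^p(\mathbb{Z})$ into itself for every $1<p<\infty$, which is exactly the claim.

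The only subtle point is verifying that the weight $\xi(1-\xi)$ used in (\ref{M12}) correctly mirrors the distance to the lattice $\mathbb{Z}$ under the identification $\mathbb{T}=(-1/2,1/2]$; this is handled by the two short calculations above, which are the whole content of the reduction. No new analytic difficulty arises beyond that, since Theorem \ref{th2} already supplies the Calder\'on--Zygmund machinery needed for $\ell^p$ boundedness.
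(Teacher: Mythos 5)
Your proposal is correct and matches the paper's intent exactly: the paper gives no separate proof of this theorem, merely remarking that condition (\ref{M12}) becomes the Mikhlin condition (\ref{mik}) with $d=1$ after periodic extension, and your two short endpoint calculations supply precisely the verification the paper leaves implicit before invoking Theorem \ref{th2}.
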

 Clearly condition (\ref{M12}) is exactly  (\ref{mik})  when extending   $m $ to a periodic function.
 Now  we replace the interval $(0,1)$ by a bounded interval $(a,b)$.  For $f\in \ell^2(\mathbb{Z})$ we define its  Fourier transform by
$$\mathcal{F}_{\mathbb{Z}}^{a,b}(f)(\xi)=\sum_{n\in \mathbb{Z}}f(n)e^{i2\pi n\;\left(\frac{\xi-a}{b-a}\right)}=\mathcal{F}_{\mathbb{Z}} (f)\left(\frac{\xi-a}{b-a}\right)$$
 and its inverse
 $$(\mathcal{F}_{\mathbb{Z}}^{a,b})^{-1}(f)(n)=\frac{1}{b-a} \int_a^bf(\xi)e^{i2\pi n\;\left(\frac{\xi-a}{b-a}\right)}\;d\xi,\qquad n\in \mathbb{Z}.$$
For a   bounded function $m$ on  $(a,b)$ we define on $\ell^2(\mathbb{Z})$ the operator $T_m^{a,b}$ by
\begin{eqnarray*}
T_m^{a,b}(f)(n)&=&\frac{1}{b-a}\int_a^b m(\xi)\mathcal{F}_{\mathbb{Z}}^{a,b}(f)(\xi)e^{i2\pi n\;\left(\frac{\xi-a}{b-a}\right)}d\xi
\\&=&\int_0^1m(a+(b-a)\xi)\mathcal{F}_{\mathbb{Z}}^{a,b}(f)( a+(b-a)\xi)e^{i2\pi n}d\xi\\
&=&\int_0^1m(a+(b-a)\xi)\mathcal{F}_{\mathbb{Z}} (f)(\xi)e^{i2\pi n}d\xi,
\end{eqnarray*}
for $n\in \mathbb{Z}$. According to  Theorem \ref{th11} we  have  the following
\begin{cor}\label{cor1}
 If   $m$ is  a bounded $C^2$-function  on a bounded interval $(a,b)$,  such that for some constant $c>0$
  $$ (\xi-a)^k (b-\xi)^k |m^{(k)}(\xi)|\leq c,   \qquad0\leq k\leq2 $$
   then $T_m^{a,b}$ is a bounded operator from $\ell^p(\mathbb{Z})$ into $\ell^p(\mathbb{Z})$ for   $1<p<\infty$.
\end{cor}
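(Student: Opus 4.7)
The plan is to reduce Corollary \ref{cor1} to Theorem \ref{th11} by means of the affine change of variable $\eta=(\xi-a)/(b-a)$, which linearly carries $(a,b)$ onto $(0,1)$ and is the change already implicit in the definitions of $\mathcal{F}_{\mathbb{Z}}^{a,b}$ and $T_m^{a,b}$. Concretely I would introduce the auxiliary symbol $\tilde m$ on $(0,1)$ by
\[
\tilde m(\eta):=m\bigl(a+(b-a)\eta\bigr), \qquad \eta\in(0,1),
\]
and show, on the one hand, that $\tilde m$ satisfies the hypothesis (\ref{M12}) of Theorem \ref{th11} and, on the other hand, that $T_m^{a,b}$ coincides with $T_{\tilde m}$ as an operator on $\ell^2(\mathbb{Z})$. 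The bounded-$\ell^p$ conclusion then transfers directly from Theorem \ref{th11}.

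For the first point, I would differentiate and use the chain rule: $\tilde m^{(k)}(\eta)=(b-a)^k m^{(k)}(a+(b-a)\eta)$ for $0\leq k\leq 2$. Since $\xi-a=(b-a)\eta$ and $b-\xi=(b-a)(1-\eta)$ with $\xi=a+(b-a)\eta$, this gives
\[
\bigl(\eta(1-\eta)\bigr)^k\,|\tilde m^{(k)}(\eta)|
=\frac{(\xi-a)^k(b-\xi)^k}{(b-a)^{2k}}\,(b-a)^k\,|m^{(k)}(\xi)|
=\frac{(\xi-a)^k(b-\xi)^k|m^{(k)}(\xi)|}{(b-a)^k}\leq \frac{c}{(b-a)^k},
\]
so $\tilde m$ is a bounded $C^2$-function on $(0,1)$ that verifies (\ref{M12}) with a constant depending only on $c$ and $b-a$.

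For the second point, the last displayed identity preceding Corollary \ref{cor1} gives
\[
T_m^{a,b}(f)(n)=\int_0^1 m\bigl(a+(b-a)\xi\bigr)\,\mathcal{F}_{\mathbb{Z}}(f)(\xi)\,e^{-i2\pi n\xi}\,d\xi
=\int_0^1 \tilde m(\xi)\,\mathcal{F}_{\mathbb{Z}}(f)(\xi)\,e^{-i2\pi n\xi}\,d\xi
=T_{\tilde m}(f)(n),
\]
so $T_m^{a,b}=T_{\tilde m}$. Applying Theorem \ref{th11} to $\tilde m$ then yields the $\ell^p(\mathbb{Z})$-boundedness of $T_m^{a,b}$ for every $1<p<\infty$. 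There is no real obstacle here: the only point that requires care is the bookkeeping of the factors $(b-a)^k$ in the chain-rule computation to confirm that the weighted-derivative condition survives the rescaling with a constant independent of $\xi$, which the display above makes explicit.
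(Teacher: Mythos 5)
Your proof is correct and is essentially the argument the paper intends: the paper states the corollary with only the remark ``According to Theorem \ref{th11}'', and the implicit content is exactly your affine rescaling $\eta=(\xi-a)/(b-a)$, the chain-rule verification that the weighted condition transforms into (\ref{M12}) with constant $c\max\bigl(1,(b-a)^{-2}\bigr)$, and the identification $T_m^{a,b}=T_{\tilde m}$ already built into the paper's displayed formula for $T_m^{a,b}$. No gaps; the only discrepancy is the sign/typo in the exponential factor, which is an inconsistency in the paper's own conventions and does not affect the argument.
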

As a typical example we have $m=\chi_{(a,b)}$  the characteristic function of $(a,b)$.
Theorem \ref{th11} can be generalized as follows.
\begin{thm}
  Let $ (a_j)_{0\leq j\leq s}$ be a  subdivision of   $[0,1]$. If $m$ is a  bounded $C^2$-function  on $(0,1)$ such that, for some constant $c>0$,
  $$ \left(\prod_{0\leq j\leq s}|\xi-a_j| ^k \right) |m^{(k)}(\xi)| \leq c; \qquad 0\leq k\leq 2. $$
   then $T_m$ is a bounded operator from $\ell^p(\mathbb{Z})$ into $\ell^p(\mathbb{Z})$ for    $1<p<\infty$.
\end{thm}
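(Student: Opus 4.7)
The plan is to localize $m$ around each singular point by a smooth partition of unity on $\mathbb{T}$, reduce each piece to a one-point Mikhlin estimate, translate the singularity to the origin, and invoke Theorem \ref{th11}.

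First, I choose a smooth partition of unity $\{\phi_j\}_{j=0}^{s-1}$ on $\mathbb{T}=\mathbb{R}/\mathbb{Z}$ such that, after identifying $a_0$ and $a_s$, each $\supp(\phi_j)$ is an arc containing exactly one singular point, which I re-label $b_j$. Writing $m=\sum_j m\phi_j=\sum_j m_j$ reduces matters to bounding each $T_{m_j}$. On $\supp(\phi_j)$ the factors $|\xi-a_i|$ for $b_i\neq b_j$ are bounded below by a positive constant, so the product hypothesis on $m$ becomes $|\xi-b_j|^k|m^{(k)}(\xi)|\leq c$ for $k=0,1,2$ there; combining with the Leibniz rule applied to $m\phi_j$ yields the one-point estimate
$$|\xi-b_j|^k\,|m_j^{(k)}(\xi)|\leq c,\qquad k=0,1,2.$$

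Next I translate. Set $\tilde m_j(\xi):=m_j(\xi+b_j)$ as a periodic function. A short Fourier computation gives $T_{\tilde m_j}=M_{b_j}\,T_{m_j}\,M_{-b_j}$, where $M_a(f)(n):=e^{2i\pi a n}f(n)$ is an isometry of $\ell^p(\mathbb{Z})$; hence $T_{\tilde m_j}$ and $T_{m_j}$ have identical $\ell^p$-norms. Restricted to $(0,1)$, $\tilde m_j$ is $C^2$ and satisfies
$$\bigl(\xi(1-\xi)\bigr)^k\,|\tilde m_j^{(k)}(\xi)|\leq c,\qquad k=0,1,2,$$
with the factor $\xi^k$ coming from the estimate just to the right of $b_j$ and $(1-\xi)^k$ from the estimate just to the left of $b_j$ via periodicity. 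Theorem \ref{th11} then yields $\ell^p$-boundedness of $T_{\tilde m_j}$, hence of $T_{m_j}$, for $1<p<\infty$. Summing finitely many bounded operators concludes the proof.

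The main technical point is the Leibniz bookkeeping: one must verify that multiplying by the smooth cutoff $\phi_j$ neither weakens nor strengthens the singularity strength at $b_j$, with the ``far'' factors $|\xi-a_i|$ for $b_i \neq b_j$ absorbed into the constant on $\supp(\phi_j)$. The translation-as-modulation identity and the verification of the hypothesis of Theorem \ref{th11} for $\tilde m_j$ are routine once this localization is in hand.
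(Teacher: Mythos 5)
Your proof is correct, and the engine is the same as the paper's: localize around each singular point, conjugate by the modulation $M_{b_j}$ to move that point to the origin of the torus, and invoke the two-point Mikhlin result (Theorem \ref{th11}, equivalently Corollary \ref{cor1} on a shifted interval). The Leibniz bookkeeping and the identity $T_{\tilde m_j}=M_{b_j}T_{m_j}M_{-b_j}$ are exactly the computation the paper carries out for the piece supported near $a_1$, where it writes $T_{m_2}(f)(n)=e^{2i\pi na_1}T_{\widetilde{m_2}}^{a_1,a_1+1}(\widetilde f)(n)$ with $\widetilde f(n)=e^{2i\pi na_1}f(n)$.

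Where you genuinely diverge is in the decomposition, and your version is the cleaner one. The paper keeps a ``regular'' piece $m_0=m\varphi$ with $\varphi=\frac{1}{s-1}\sum_{j=1}^{s-1}\varphi_j$ and claims it is handled by Theorem \ref{th11}; but for $s\geq 3$ this averaged cutoff equals $\frac{s-2}{s-1}$ at each interior node $a_j$ rather than vanishing there, so $m_0$ still carries every interior singularity and Theorem \ref{th11} does not apply to it as written (the paper's $s=2$ case is fine, since then there is a single $\varphi_1$ and $m\varphi_1$ does vanish near $a_1$). Your genuine partition of unity $\sum_j\phi_j=1$ on $\mathbb{T}$, with each $\supp(\phi_j)$ an arc containing exactly one singular point, guarantees that every summand has exactly one singularity and is therefore covered by the translation argument; it also treats the endpoint pair $a_0\sim a_s$ on the same footing as the interior nodes instead of as a separate case. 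So your route buys a uniform treatment of all $s$ and repairs the gap in the paper's $s\geq 3$ step, at no extra cost.
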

\begin{proof}
  Assume first that $s=2$ and let  $0=a_0 <a_1<a_2=1$. Let $\varepsilon> 0$ and $\varphi$ a be $C^\infty$ function on $\mathbb{R}$ such that
  $]a_1-2\varepsilon,a_1+2\varepsilon[\subset ]0,1[$ and  $\varphi(\xi)=0$ for
  $\xi \in]a_1- \varepsilon,a_1+ \varepsilon[$ and  $\varphi(\xi)=1$ for all
  $\xi \notin]a_1-2\varepsilon,a_1+2\varepsilon[$. Put
  $$m(\xi)= m(\xi)\varphi(\xi)+m(\xi)(1-\varphi(\xi))=m_1(\xi)+m_2(\xi),\qquad\xi\in(0,1).$$
 and
 $$T_m= T_{m_1}+T_{m_2}$$
 Clearly  the boundedness of $T_{m_1}$  is a consequence of Theorem \ref{th11}. However, if  we  consider   $\widetilde{m_2}$  the   $1$- periodic function such that $\widetilde{m_2}(\xi)=m_2(\xi)$ for $\xi\in [0,1]$,  then one can write
   \begin{eqnarray*}
  T_{m_2}(f)(n)&=&\int_{0}^{1}\widetilde{m_2}(\xi)\mathcal{F}_{\mathbb{Z}} (f)(\xi)e^{i2\pi n\xi}d\xi
  \\&=&\int_{a_1}^{a_1+1}\widetilde{m_2}(\xi)\mathcal{F}_{\mathbb{Z}} (f)(\xi)e^{i2\pi n\xi}d\xi
  \\&=&\int_{0}^{1}\widetilde{m_2}(\xi-a_1)\mathcal{F}_{\mathbb{Z}} (f)(\xi+a_1)e^{i2\pi n(\xi-a_1)}d\xi
  \\&=&e^{i2\pi na_1}\int_{0}^{1}\widetilde{m_2}(\xi+a_1)\mathcal{F}_{\mathbb{Z}} ( \widetilde{f})(\xi)e^{i2\pi n \xi}d\xi
  \\&=&e^{i2\pi na_1} T_{\widetilde{m_2}}^{a_1,a_1+1}(\widetilde{f})(n)
    \end{eqnarray*}
  where     $\widetilde{f}$ is the function  given  by $\widetilde{f}(n)=e^{i2\pi n a_1}f(n)$, $n\in \mathbb{Z}$. Now obseve that $\widetilde{m_2}$ satisfies the hypothesis of Corollary \ref{cor1} on $(a_1,a_1+1)$, then   the boundedness of $T_{m_2}$ follows.
\par Now for  $s\geq3$ we proceed as follows: choose $\varepsilon>0$ such that the intervals $]a_j-2\varepsilon,a_j+2\varepsilon[ $ are disjoint for all $1\leq j\leq s-1$
and  $C^\infty$ functions $\varphi_j$ with  $\varphi_j(\xi)=0$ for
  $\xi \in]a_j- \varepsilon,a_j+ \varepsilon[$ and  $\varphi(\xi)=1$ for
  $\xi \notin]a_j-2\varepsilon,a_j+2\varepsilon[$. Put
  $$\varphi= \frac{1}{s-1}\;\sum_{j=1}^{s-1}\varphi_j$$
and   write
$$m= m\varphi+m(1-\varphi)= m\varphi+\sum_{j=1}^{s-1}\frac{(1-\varphi_j)}{s-1}\;m= m_0+\sum_{j=1}^{s-1}m_j$$
and
$$T_m=T_{m_0}+\sum_{j=1}^{s-1}T_{m_j}.$$
Therefore from the above  argument  all the operators  $T_{m_j}$ are bounded on $\ell^p(\mathbb{Z})$, $1< p< \infty$.
\end{proof}
\section{ Applications}
\subsection{Discrete  Riesz Transforms}
  For a complex-valued function $f$ on $\mathbb{Z}^d$ its  discrete Laplacian is  given by
$$\Delta_d(f)(n)=\sum_{j=1}^d \partial_j\partial^*_jf(n)=\sum_{j=1}^ d \partial_j^*\partial_jf(n), \quad n\in \mathbb{Z}^d$$
where  $\partial_jf(n)=f(n+e_j)-f(n)$ and $ \partial^*_jf(n)=f(n)-f(n-e_j)$.  We have
$$\Delta_d(f)(n)=\sum_{j=1}^d  \Big(f(n+e_j)-2df(n)+f(n-e_j)\Big).$$
 The discrete Laplacian $\Delta_d$ is a bounded  self-adjoint operator on $\ell^2(\mathbb{Z}^d) $ and one has
  $$ \mathcal{F}_{\mathbb{Z}^d}(\Delta_d(f))(\xi)= -\sum_{j=1}^d |e^{i2\pi\xi_j}-1|^2\mathcal{F}_{\mathbb{Z}^d}(f)(\xi)=-4 \left(\sum_{j=1}^d \sin^2(\pi\xi_j)\right)\mathcal{F}_{\mathbb{Z}^d}(f)(\xi).$$
  The  discrete Riesz transforms $R_j$ , $j=1,...,d$, associated with $\Delta_d$ are defined     on $ \ell^2(\mathbb{Z}^d) $ as the multiplier operators
    $$ \mathcal{F}_{\mathbb{Z}^d} (R_j(f))(\xi)=\frac{e^{-i\pi\xi_j}\sin(\pi\xi_j) }{2\left(\sum_{k=1}^d \sin^2(\pi\xi_k)\right)^{1/2}}\mathcal{F}_{\mathbb{Z}^d} (f) (\xi),$$
   its can be    interpret   as $R_j(f)=\partial_j \Delta_d^{-1/2}$. Let us set
   $$\psi_j(\xi)=\frac{e^{-i\pi\xi_j}\sin(\pi\xi_j) }{2\left(\sum_{k=1}^d \sin^2(\pi\xi_k)\right)^{1/2}}$$
 and  prove that  $\psi_j$ satisfies the  Mikhlin condition (\ref{mik}). This can be seen by using the fact  that $\gamma\in\mathbb{N}^d$,
    $\partial^\gamma\psi_j $   is a linear combination  of  the  following functions
   $$ e^{-i\pi\xi_j}\prod_{k=1}^d\sin ^{\alpha_k}(\pi\xi_k)\prod_{k=1}^d\cos ^{\beta_k}(\pi\xi_k)   \left(\sum_{i=k}^d \sin^2(\pi\xi_k)\right)^{-\sum_{i=1}^d(\alpha_k+\beta_k)/2}$$
   where $\sum_{k=1}^d \alpha_k\leq |\gamma|+1$ and $\sum_{k=1}^d \beta_k\leq |\gamma|$. Hence  using the fact that for $k=1,...,d$, $\xi_k\in (-1/2,1/2)$ and $2|\xi_k|\leq |\sin\pi\xi_k|\leq \pi|\xi_k|$
we obtain
 $$  \left|\prod_{k=1}^d\sin ^{\alpha_k}(\pi\xi_i)\prod_{k=1}^d\cos ^{\beta_k}(\pi\xi_k)   \left(\sum_{k=1}^d \sin^2(\pi\xi_k)\right)^{-\sum_{k=1}^d(\alpha_k+\beta_k)/2}\right|\leq |\xi|^{ -\sum_{k=1}^d\beta_k} \leq |\xi|^{-|\gamma|}.$$
 Therefore we can apply Theorem \ref{th2}  to assert that $R_j$ is bounded on $\ell^p(\mathbb{Z}^d)$ for  $1<p<\infty$.
 \subsection{Imaginary powers of the discrete Laplace operator $\Delta_d$}
  Theorem \ref{th2} also applies to imaginary powers of the discrete Laplacian: $(-\Delta_d)^{it}$ for  $t\in \mathbb{R}$,
 it is the multiplier operator with multiplier  $\left(4\sum_{j=1}^d \sin^2(\pi\xi_j)\right)^{it}$
  \subsection{Strichartz type estimates  for discrete wave equation }
 We define the $d$-dimensional discrete wave equation    by
\begin{eqnarray}\label{wave}
&&\Delta_{ d} u(n,t)=\partial^2_t u(n,t),
\\&&u(n,0)=f(n),\quad \partial_t u(n,t)=g(n),  \quad (n,t)\in \mathbb{Z}^d\times \mathbb{R}\nonumber
\end{eqnarray}
where $f$ and $g$  are a given suitable  functions on $\mathbb{Z}^d$.   Considered as a  discrete counterpart of the continuous  wave equation,   many authors have been interested in studying this equation    see, for example, \cite{kop1, kop2, sla} and the references therein.
Putting
$$\phi(\xi)=2\sqrt{ \sum_{j=1}^d \sin^2(\pi\xi_j)}$$
and applying the discrete Fourier
transform, considering t as a parameter,  we deduce that  the solution of (\ref{wave})  can be written (formally) in the form
 $$u(n,t) =\mathcal{F}_{\mathbb{Z}^d}^{-1}\Big( \cos(t \phi(.))\mathcal{F}_{\mathbb{Z}^d}(f)\Big)(n)+ \mathcal{F}_{\mathbb{Z}^d}^{-1}\Big(\frac{\sin(t \phi(.))}{ \phi(.)} \mathcal{F}_{\mathbb{Z}^d}(f)\Big)(n).$$
We will prove the  following version of the Strichartz estimates.
 \begin{equation}\label{str}
  \|u(.,t)\|_{\ell^q }\leq c(t)\left(\|g\|_{\ell^p }+     \sum_{j=1}^d\|\partial_j f\|_{\ell^p }\right).
  \end{equation}
 for all $1<p\leq 2\leq q<\infty$.
  \par Let us obseve   first that   $\xi\rightarrow \sin(t \phi(\xi))/ \phi(\xi)$ is a   $C^\infty$- function  on $\mathbb{T}^d$  and  then in view of   Theorem  \ref{t11} we have
$$\left\|\mathcal{F}_{\mathbb{Z}^d}^{-1}\Big(\frac{\sin(t \phi(.))}{ \phi(.)} \mathcal{F}_{\mathbb{Z}^d}(g)\Big)\right\|_{\ell^q }
\leq c(t)\|g\|_{\ell^p }$$
whenever $1<p\leq q<\infty$.  To prove   (\ref{str})  it suffices to show   that
$$\left\|\mathcal{F}_{\mathbb{Z}^d}^{-1}\Big( \cos(t \phi(.))\mathcal{F}_{\mathbb{Z}^d}(f)\Big)\right\|_{\ell^q}
\leq c(t)    \sum_{j=1}^d\|\partial_j f\|_{\ell^p }.$$
 We write
$$ \cos( t\phi(\xi) )\mathcal{F}_{\mathbb{Z}^d}(f)(\xi)  =\frac{\cos t  \phi(\xi) }{  \phi(\xi)}\sum_{j=1}^d \mathcal{F}_{\mathbb{Z}^d}(R_j( \partial_jf))(\xi).$$
 As
$$\left|\frac{\cos( \phi(\xi))) }{  \phi(\xi)}\right|\leq \frac{c}{|\xi|}$$
it follows from  Theorem \ref{th1}  that
\begin{eqnarray*}
 \left\|\mathcal{F}_{\mathbb{Z}^d}^{-1}\Big( \cos(t \phi(.))\mathcal{F}_{\mathbb{Z}^d}(f)\Big)\right\|_{\ell^q }
&\leq& c(t)   \sum_{j=1}^d\|R_j(  \partial_j(f) \|_{\ell^p }
\end{eqnarray*}
and by using $\ell^p$-boundedness of $R_j$,
$$  \left\|\mathcal{F}_{\mathbb{Z}^d}^{-1}\Big( \cos(t \phi(.))\mathcal{F}_{\mathbb{Z}^d}(f)\Big)\right\|_{\ell^q }
\leq c(t)   \sum_{j=1}^d\|\partial_jf \|_{\ell^p }$$
   which conclude the proof of  (\ref{str}).


\begin{thebibliography}{HD}
\bibitem{CG}
O.  Ciaurri, T. Gillespie,  L. Roncal, J.L. Torrea, J. L. Varona,
\textit{Harmonic analysis associated with a discrete Laplacian}, Journal d'Analyse Math\'{e}matique,  132 (2017), 109-131.
\bibitem{CW1}
 R. R. Coifman and G. Weiss,\textit{ Analyse harmonique  non-commutatives sur certains
espaces homog\`{e}nes}, Lecture Notes in Math., vol.242, Springer-Verlag, Berlin
and New York, 1971.
\bibitem{CW1}
 R. R. Coifman and G. Weiss,\textit{ Extensions of Hardy spaces and their use in
analysis}, Bull. Amer. Math. Soc. 83 (1977), 56-645.
\bibitem{Hor}
L. H\"{o}rmander,\textit{ Estimates for translation invariant operators in $L^p$ spaces}, Acta Math. 104(1960), 93-139.
\bibitem{kop2}
 I. Egorova, E. Kopylova, and G. Teschl, \textit{Dispersion estimates for one-dimensional discrete
Schr\"{o}dinger and wave equations}, J. Spectr. Theory .

\bibitem{kop1}
E. Kopylova
\textit{On dispersion decay for discrete wave equations},Communications in Mathematical Analysis 17(2),209-216.
\bibitem{sla}
A. Slavik, \textit{Discrete-Space Systems of Partial Dynamic Equations and Discrete-Space Wave Equation}, Qualitative Theory of Dynamical Systems
16(2), 299-315.






\end{thebibliography}
 \end{document}